\documentclass{article}
\usepackage{amssymb}
\usepackage{amsthm}
\newtheorem{theorem}{Theorem}
\newtheorem*{md}{Model Example}\newtheorem*{exa}{Example}
\newtheorem*{ml}{Main Lemma}
\newtheorem{lemma}{Lemma}
\newtheorem{corollary}{Corollary}
\newtheorem{definition}{Definition}

\newcommand{\La}{ \Lambda}

\newcommand{\R}{{\mathbb R}}

\newcommand{\Z}{{\mathbb Z}}
\newcommand{\N}{{\mathbb N}}

\newcommand{\C}{{\mathbb C}}

\newcommand{\G}{\Phi}

\newcommand{\g}{\varphi}

\begin{document}
\begin{large}

\title{Discrete Translates in Function Spaces} 

\author{Alexander Olevskii\thanks{This author's research is supported in part  by the Israel Science  Foundation}\,  and  Alexander Ulanovskii}

\date{}\maketitle

\noindent A.O.: School of Mathematics, Tel Aviv University\\ Ramat Aviv,  69978 Israel. E-mail:
 olevskii@post.tau.ac.il

\medskip

\noindent A.U.: Stavanger University,  4036 Stavanger, Norway\\ E-mail: Alexander.Ulanovskii@uis.no

\begin{abstract}We construct 
a Schwartz function  $\varphi$ such that for every exponentially small perturbation of integers $\La$ 
the set of translates $\{\varphi(t-\lambda),\lambda\in\La\}$                                    spans the space $L^p(\R)$ for every $p>1$.
                                  This result remains true  for more general function spaces $X$,  whose  norm is "weaker" than $L^1$  (on bounded functions).
\end{abstract}

   \section{Introduction}

   \noindent{\bf 1. Completeness of translates}. In what follows we will use  the standard form of Fourier transform  \[\varphi(x)=\hat\Phi(x):=\int_\R e^{-2\pi i tx}\Phi(t)\,dt.\]

Classical Wiener's  theorems \cite{w}  provide  necessary and sufficient conditions on a function $\varphi$ whose  translates $\{\varphi(x-s), s\in\R\}$ span the space  $L^1(\R)$ or $L^2(\R)$:

\medskip

(i) The translates     of  $\g\in L^1(\R)$ span $L^1(\R)$  if and only if  $\hat\g$ does not vanish;

 (ii) The translates     of  $\g\in L^2(\R)$ span $L^2(\R)$       if and only if  $\hat\g$ is non-zero almost everywhere on $\R$.

\medskip

When $1<p<2$, Beurling \cite{B} proved that the translates of a function $\varphi\in L^p(\R)\cap L^1(\R)$ span $L^p(\R)$ provided
the  Hausdorff dimension of the zero set of  $\hat\varphi$ is  less than $ 2(p-1)/p$. However, this condition is not necessary. Moreover, the spanning property of the translates of  $\g\in L^p(\R)$ cannot be characterized in terms of the zero set of  $\hat\varphi$, see \cite{LO}.

\medskip\noindent{\bf 2. Discrete spectra}.
It is well known that sometimes even a {\it discrete} set of  translates may  span $L^p(\R).$

\begin{definition}  We say that a discrete set $\La\subset\R$ is a {\it $p-$spectrum} {\rm (}for translates{\rm )},   if there is a function $\g\in L^p(\R)$ such that the family of translates
 \[              \{\g(t-\lambda),   \lambda\in \La\}\]
        spans $L^p(\R)$.  Such a function $\g$ is called  {\it a $\La-$generator}.
\end{definition}

     For $p=2 $, using the Fourier transform one can re-formulate the definition above as follows:

      {\it A set $\La$ is a $2$-spectrum if and only if  there exists $\G\in L^2(\R)$ such that the  system
  \[\{\G(t) e^{2\pi i\lambda t}, \lambda\in \La\}\] spans the whole space $L^2(\R) $.}

  Using this, it is easy to see that the set of integers $\Z$ is not a $2$-spectrum.
        On the other hand, the following is true:

                 {\it  Every sequence $\La$ obtained by a "perturbation" of integers,
     \begin{equation}\label{ais}
\La=\{\lambda_n:=n+a_n, n\in\Z\},\quad a_n\ne0, n\in\Z,
\end{equation}where $a_n\to0$ as $|n|\to\infty,$ is a $2$-spectrum} (\cite{O}).

          Assume  the "perturbations" $a_n$ are exponentially small:
\begin{equation}\label{an}
|a_n|<Cr^{|n|}, \quad n\in\Z;\ \mbox{ for some }  C>0,0<r<1.
\end{equation}
      Then    a stronger result is true  (\cite{OU2}):

      {\it If $\La$ satisfies (\ref{ais}) and (\ref{an}), then a $\La$-generator $\g$ can be chosen to be a Schwartz
           function. The decay condition (\ref{an}) is essentially sharp}.

         This result may seem surprising, since when (\ref{an}) holds, the set $\La$ is "very close" to the  "limiting case" $\La=\Z$ when
          no generator exists.

          Since every $p_0$-spectrum $\La$ is also a $p$-spectrum  for every
          $p> p_0$ (see \cite{Bl}), the result above holds for  $p>2.$
          However, in fact  no perturbations are "needed" in this case:

         {\it The set of integers $\Z$ is a $p-$spectrum for all $p > 2 $}, see \cite{ao}.

         On the other hand, for $p=1$ the spectrum is never uniformly discrete.
          The following is proved in \cite{BOU}:

         {\it A set $\La$ is a $1$-spectrum if and only if the Beurling--Malliavin density
          of $\La$ is infinite}.

          Notice, so far $p=1$ is the only case when an effective characterization
           of $p$-spectra is known.

          A  detailed survey on the topic, including the results above, may be
          found in \cite{ou}.

\medskip\noindent{\bf 3.  The case ${\bf 1<p<2}$}.          This case is much less investigated.  In particular, it has been
          an open problem  whether   a $p$-spectral set can be
          uniformly discrete, or at least have a finite density.

      \section{Results}                    In this  note we prove the following

   \begin{theorem}
                                  There is a function $\varphi \in  S(\R)  $, such that for every set   $\La$ satisfying
                               (2) and (3), the set of translates
$ \{\varphi(t-\lambda),   \lambda\in \La\}$
                            spans the whole space $L^p(\R)$ for each $p>1$.\end{theorem}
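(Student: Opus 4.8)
The plan is to reduce the $L^p$ statement for all $p>1$ to the already-known $L^2$ result together with a careful choice of generator that is "robustly" good. Recall from the excerpt that under (2) and (3) there exists a Schwartz $\Lambda$-generator for $L^2(\R)$, and that every $p_0$-spectrum is a $p$-spectrum for $p>p_0$; the new content here is that a \emph{single} $\varphi$ works simultaneously for \emph{every} admissible $\Lambda$ and for \emph{every} $p>1$, including $p$ arbitrarily close to $1$. My first step would be to pass to the Fourier side: if $\varphi=\hat\Phi$ with $\Phi$ supported on a fixed compact interval, then spanning $L^p(\R)$ by $\{\varphi(t-\lambda)\}$ is equivalent (by duality, for $1<p<\infty$) to the statement that no nonzero $g\in L^{p'}(\R)$ annihilates all translates, i.e. that $\widehat{g}\,\Phi(\cdot)$ cannot vanish on $\Lambda$ in the appropriate sense — more precisely, that the only $h$ in a suitable space with $\widehat h = \widehat g\,\overline{\widehat\varphi}$ vanishing on $\Lambda$ is $h=0$. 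The heart of the matter is to construct $\Phi$ so that this fails for \emph{all} exponentially-close perturbations of $\Z$ at once.

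The key idea I would pursue is to build $\Phi$ (equivalently $\varphi$) whose behavior encodes an interpolation/uniqueness phenomenon that is stable under exponentially small moves of the nodes. Concretely: choose $\Phi\in C_c^\infty$ on, say, $[-1/2,1/2]$, so that $\varphi$ is entire of exponential type $\pi$ and Schwartz on $\R$. A function $g\in L^{p'}$ annihilating the translates means the entire function $F(z):=\widehat g(z)\,\varphi(-z)$ (which is of exponential type, since $\varphi$ is) vanishes on $\Lambda=\{n+a_n\}$. Because $|a_n|<Cr^{|n|}$, the zero set $\Lambda$ is an exponentially small perturbation of $\Z$; the product $\sin\pi z$ vanishes on $\Z$, and one can compare the canonical product over $\Lambda$ with $\sin\pi z$ to show their ratio is an entire function of exponential type $0$ that is bounded on $\R$, hence — with the right normalization — essentially constant up to polynomial factors. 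The plan is to choose $\varphi$ decaying so fast (faster than any exponential rate $r$, i.e. with $\widehat\Phi$ — no: with $\Phi$ smooth enough that $\varphi$ decays faster than $e^{-\varepsilon|x|^{1-\delta}}$ or so) that the resulting $F$ would have to decay too fast along $\R$ to be the Fourier transform of a nonzero $L^{p'}$ function for any $p'<\infty$, forcing $g=0$. In other words, $\varphi$ is tailored so that the $\sin\pi z$-type oscillation forced by vanishing on $\Lambda$ is incompatible with the decay $\varphi$ imposes, uniformly over all $\Lambda$ allowed by (3), and uniformly over the integrability exponent.

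Carrying this out, the main steps in order would be: (1) fix the ansatz $\varphi=\hat\Phi$, $\Phi\in C_c^\infty$, with prescribed rapid (but sub-exponential-rate-free) decay of $\varphi$ on $\R$, and record the exponential-type and Schwartz properties; (2) set up the annihilator reformulation and reduce to showing the entire function $F=\widehat g\cdot\varphi(-\cdot)$ must be identically zero; (3) use (3) to control the canonical product over $\Lambda$ via comparison with $\sin\pi z$, extracting that any entire function of exponential type vanishing on $\Lambda$ and with controlled growth is divisible by this product and the quotient has type $0$; (4) combine the forced oscillatory lower bound coming from the $\sin\pi z$-factor along $\R$ with the decay that $\varphi$ forces on $F$, and derive a contradiction unless $F\equiv0$, hence $g\equiv0$; (5) conclude by duality that the translates span $L^p(\R)$ for every $p>1$. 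I expect step (3)–(4) to be the main obstacle: one must make the comparison between the $\Lambda$-product and $\sin\pi z$ genuinely quantitative and uniform in the perturbation $\{a_n\}$ (using only $|a_n|<Cr^{|n|}$), and then balance this against the decay of $\varphi$ so that the argument survives all $p>1$ simultaneously — i.e. choosing the decay rate of $\varphi$ to beat the weakest integrability ($p'$ close to $\infty$) while still keeping $\varphi$ Schwartz. The exponential sharpness remark in the cited $L^2$ result suggests this balance is delicate and is exactly where the construction of $\varphi$ must be done with care.
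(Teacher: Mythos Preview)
Your proposal has a genuine gap at the very first step, and it is fatal rather than technical. You take $\Phi\in C_c^\infty$ supported in $[-1/2,1/2]$ and set $\varphi=\hat\Phi$. But then the \emph{full} family of real translates $\{\varphi(\cdot-s):s\in\R\}$ already fails to span $L^p(\R)$ for any $p$: by Wiener's theorem (for $p=1,2$) or Beurling's theorem (for $1<p<2$), spanning requires the zero set of $\hat\varphi=\Phi$ to be small, whereas here $\Phi$ vanishes on a half-line. Concretely, any $g\in L^{p'}(\R)$ whose Fourier transform is supported outside $[-1/2,1/2]$ satisfies $\varphi\ast g\equiv 0$, hence $g$ annihilates every translate of $\varphi$. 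So even if your entire-function argument succeeded in proving $F=\varphi\ast g\equiv 0$, this does \emph{not} force $g=0$, and the duality step ``$F\equiv 0\Rightarrow g=0$'' in your outline simply fails. The ansatz ``$\Phi$ compactly supported'' is therefore incompatible with the conclusion of the theorem.

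There is also a secondary issue in step (4): you write that the decay of $\varphi$ ``forces'' decay of $F=\varphi\ast g$ on $\R$, but for $g\in L^{p'}$ with $p'$ large this convolution is a priori only in $L^{p'}$ (by Young), with no pointwise decay, so the claimed incompatibility with the $\sin\pi z$--type lower bound is not established.

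The paper's construction goes in a quite different direction. It takes $\Phi\in S(\R)$ that is \emph{strictly positive off $\Z$} but has ``exponentially deep'' zeros at the integers and at infinity, in the sense $|\Phi(t)|\le C_1\exp\bigl(-C_2(|t|+1/d(t,\Z))\bigr)$, and similarly for $\Phi'$. The annihilator condition gives $(\varphi\ast h)|_\Lambda=0$; the deep-zero structure places $\varphi\ast h$ in a class $\hat K$ of analytic functions for which the paper proves a uniqueness theorem (via periodization and the Poisson formula, not via canonical products): $f\in\hat K$, $f|_\Lambda=0$ implies $f\equiv 0$. Hence $\varphi\ast h\equiv 0$, and now---because $\Phi$ vanishes only on $\Z$, a set of Hausdorff dimension $0$---Beurling's theorem gives the contradiction. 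The essential point you are missing is that $\Phi$ must be chosen nonvanishing almost everywhere, and the analyticity needed for the uniqueness argument comes from the exponential decay of $\Phi$ at infinity, not from compact support.
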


                             The result holds for more general function spaces
                              "arbitrarily closed" to the space $L^1(\R)$, see the last section.

         The function  $\varphi$  will be  effectively constructed as the Fourier
          transform of a Schwartz function $\Phi$ with  "exponentially deep"  zeros
         at the integers and at infinity.

        The proof is based on a certain uniqueness theorem for tempered distributions.



\section{Functions with Deep Zeros}

In this section we present a uniqueness result for a class of analytic functions whose inverse Fourier transforms  have "deep zeros". Then in the next section this result will be extended to a certain class of  temperate distributions.

\begin{definition} Denote by $K_0$ the class of continuous functions $\Phi$ satisfying  the condition
\begin{equation}\label{dz}|\G(t)|\leq C_1 e^{-C_2(|t|+\frac{1}{d(t,\Z)})}, \quad  t\in\R,
\end{equation}where $d(t,\Z)=\min_{n\in\Z}|t-n|$, and $C_1=C_1(\Phi),C_2=C_2(\Phi)$ are positive constants depending on $\G$.
\end{definition}

Condition (\ref{dz}) means that $\G$ has "deep" zeros on the set of integers and at infinity.

Next, let $\hat K_0$ denote the class of the Fourier transforms $\varphi=\hat\Phi$ of functions  $\Phi\in K_0$.

Condition (\ref{dz}) implies that $\Phi$ has an exponential decay at $\pm\infty$. Hence, every element $\varphi\in\hat K_0$ is analytic and bounded in the horizontal strip $|$Im$\,z|<C,$ $0<C<C_2/(2\pi)$, where $C_2$ is the constant in (\ref{dz}). Clearly,  every derivative of $\varphi$ also belongs to $\hat K_0$.

\begin{md}    Suppose  $\La$ satisfies  (\ref{ais}) and (\ref{an}) and $\varphi\in \hat K_0$. If $\g|_\La=0$, then $\g=0$.
\end{md}

In other words, the sets $\La$ satisfying (\ref{ais}) and (\ref{an}) are uniqueness sets for the class $\hat K_0$.

\begin{proof}[Sketch of            Proof]         Assume  $\varphi\in\hat K_0$ and  $\g|_\La=0$ for some $\La$ satisfying (\ref{ais}) and (\ref{an}). We have to show that $\g=0$.

      Since $\varphi'$ is bounded on $\R$, from (\ref{ais}) and (\ref{an}) we get
\begin{equation}\label{ra}
                      |\g(n)| = O(r^{|n|}),\quad |n|\to\infty,
\end{equation}where $0<r< 1.$

                 Consider the periodization  $P(\G)$ of  $\G $ defined by

\[
                       P(\G) (t):= \sum_{k\in\Z} \G(t+k).
\]
                   The Fourier decomposition of $P(\G)$ has the form
\[
                       P(\G) (t)= \sum_{n\in\Z} \g(n) e^{2\pi int}.
\]
                  By (\ref{ra}),  $P(\G)$ is  analytic in some horizontal strip containing $\R$. On the other hand, it easily follows from  (\ref{dz}) that it has zero of infinite order at the origin.
                    Hence, $P(\G)= 0$,  so that we have
\[
                    \g(n)=0, \quad  n\in\Z.
\]
                Iterating this argument, one may prove that every derivative of $\g $ vanishes
               at the integers, and the result follows.                   See  details in [OU16], Lecture~11.

\end{proof}

    \section{Distributions with Deep Zeros}

              Let $S(\R)$ denote the space of Schwartz test-functions,
                and  $S'(\R)$ the space of tempered distributions on $\R$.
We will denote by $\langle F, \Phi\rangle$ the action of the distribution $F\in S'(\R)$ on the test function $\Phi\in S(\R)$.

We would like to extend condition (\ref{dz}) to tempered distributions.

Recall  that every tempered distribution $F\in\mathcal{S}'(\R)$ is the distributional derivative of finite order, $F=\Phi^{(k)}$,  of a continuous function   $\Phi$ having at most polynomial growth on the real axis (see \cite{FJ}, Theorem 8.3.1).

 \begin{definition} Denote by  $K$ the class of all tempered distributions  $F\in S'(\R)$ which  admit a representation \begin{equation}\label{dd}F=\Phi_1^{(k_1)}+\dots+\Phi_l^{(k_l)},\end{equation} where  $l\in\N,k_1,\dots,k_l\in\N\cup\{0\}$ and  $\Phi_1,\dots,\Phi_l$ are continuous functions satisfying (\ref{dz}).
\end{definition}

Let $\hat K$ denote the class the distributional Fourier transforms of the elements of $K$.

Assume $\Phi$ satisfies (\ref{dz}). Then the Fourier transform $\varphi=\hat\Phi$ is analytic and bounded in some horizontal strip. The distributional Fourier transform of $\Phi^{(k)}$ is the function
\[
\widehat{\Phi^{(k)}}= (2\pi ix )^k\varphi(x).
\]
It is therefore an analytic function of at most polynomial growth in the strip.

By the definition of class $\hat K$, we have
\begin{equation}\label{0}
x^k\varphi(x)\in \hat K,\quad \mbox{for every } k\geq0.
\end{equation}
From (\ref{dd}) it follows that   $\hat K$ consists of the functions $f$  admitting a representation
\[
f(x) = \sum_{j=1}^l(2\pi ix )^{k_j}\varphi_j(x), \quad \varphi_j=\hat\Phi_j, \Phi_j\in K_0,\quad j=1,\dots,l.
\]

 We will now list several simple properties of the class $\hat K$.

\begin{lemma} (a) $\hat K$ is a linear space over $\C$.

(b) Every element $f\in \hat K$ is analytic  and of at most polynomial growth in some strip $|$Im$\,z|<C, C=C(f)>0.$

(c) If $f\in \hat K$, then $xf(x)\in \hat K$ and $f'\in\hat K$.

(d) If $f\in \hat K$ then  Re$\, f(x)\in \hat K$ and Im$\, f(x)\in\hat K$.

(e) Assume $\varphi=\hat\Phi$, where  $\Phi\in S(\R)$ is such that  $\Phi,\Phi'\in K_0$.
Then for every $1\leq q\leq\infty$ and every $h\in L^q(\R)$ we have $\varphi\ast h\in \hat K.$

\end{lemma}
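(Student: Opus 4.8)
The plan is to verify the five assertions (a)--(e) in order, each reducing to the definitions and to the elementary structure of the class $K_0$. For (a), linearity over $\C$ is immediate: if $f=\sum_j (2\pi i x)^{k_j}\varphi_j$ and $g=\sum_m (2\pi i x)^{l_m}\psi_m$ with $\varphi_j,\psi_m$ Fourier transforms of elements of $K_0$, then $\alpha f+\beta g$ is again a finite sum of the same shape, using that $K_0$ is closed under scalar multiples and that a term $(2\pi i x)^k$ multiplying a scalar combination of $\varphi$'s stays in the list. For (b), this is already recorded in the text just before the lemma: condition (\ref{dz}) forces exponential decay of $\Phi$ at $\pm\infty$, hence $\varphi=\hat\Phi$ extends analytically and boundedly to a strip $|\im z|<C_2/(2\pi)$, and multiplying by the polynomial $(2\pi i x)^{k_j}$ only changes ``bounded'' to ``at most polynomial growth'' on a (possibly smaller common) strip; taking the minimum of the finitely many strip widths gives the claimed $C=C(f)$.

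For (c), I would use the structural description $f(x)=\sum_{j=1}^l (2\pi i x)^{k_j}\varphi_j(x)$ with $\varphi_j=\hat\Phi_j$, $\Phi_j\in K_0$. Then $x f(x)=\sum_j (2\pi i)^{-1}(2\pi i x)^{k_j+1}\varphi_j(x)$ is visibly of the same form, so $xf\in\hat K$; equivalently one may invoke (\ref{0}). For $f'$, differentiate termwise: $f'(x)=\sum_j \big[(2\pi i)k_j (2\pi i x)^{k_j-1}\varphi_j(x)+(2\pi i x)^{k_j}\varphi_j'(x)\big]$, and it suffices to know $\varphi_j'\in\hat K_0$, which holds because $\varphi_j'=\widehat{-2\pi i t\,\Phi_j(t)}$ and $t\mapsto t\Phi_j(t)$ still satisfies (\ref{dz}) (multiplying by $t$ is absorbed into the constant $C_2$ in the exponential, at the cost of shrinking $C_2$ slightly). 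Thus each summand of $f'$ lies in $\hat K$, and by (a) so does $f'$. For (d), write $f=\sum_j(2\pi i x)^{k_j}\varphi_j$; on the real line $\Re f$ and $\Im f$ are real-linear combinations of the $(2\pi i x)^{k_j}\varphi_j(x)$ and their conjugates. The point is that $\overline{\varphi_j(x)}=\overline{\hat\Phi_j(x)}=\hat{\tilde\Phi}_j(x)$ where $\tilde\Phi_j(t):=\overline{\Phi_j(-t)}$ again satisfies (\ref{dz}) (the bound is symmetric under $t\mapsto -t$ and under conjugation), and $\overline{(2\pi i x)^{k}}=(-1)^k(2\pi i x)^k$ on $\R$; hence $\overline{f(x)}\in\hat K$, and then $\Re f=\tfrac12(f+\bar f)$, $\Im f=\tfrac1{2i}(f-\bar f)$ lie in $\hat K$ by (a).

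The substantive item is (e). Here $\Phi\in S(\R)$ with $\Phi,\Phi'\in K_0$, and we must show $\varphi\ast h\in\hat K$ for $h\in L^q(\R)$, $1\le q\le\infty$. The idea is to pass to the Fourier side: $\widehat{\varphi\ast h}=\widehat\varphi\cdot\widehat h$ — but we should instead think of $\varphi\ast h$ as the Fourier transform of $\Phi\cdot\check h$ (up to normalization), i.e.\ identify a function $\Psi$ with $\widehat\Psi=\varphi\ast h$ and show $\Psi$ (or rather $\varphi\ast h$) is in $\hat K$ by exhibiting it as $\Psi^{(m)}$-type data satisfying (\ref{dd}). Concretely: since $h\in L^q$, its inverse Fourier transform $g:=\check h$ is a tempered distribution, and by the structure theorem $g=G^{(m)}$ for some continuous $G$ of polynomial growth and some $m$; then $\Phi g=\sum_{i=0}^m \binom{m}{i}(-1)^{m-i}(\Phi^{(m-i)}G)^{(i)}$ by the Leibniz rule for the distributional product of a Schwartz function with a distribution. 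Each factor $\Phi^{(m-i)}$ lies in $K_0$ — this is exactly why the hypothesis includes $\Phi,\Phi'\in K_0$ and, via property (c)/closedness under differentiation of $\hat K_0$ established above, all derivatives of $\Phi$ inherit deep zeros (each differentiation multiplies the Fourier side by a polynomial and shrinks $C_2$ a little, which is harmless) — and $\Phi^{(m-i)}(t)G(t)$ is then a continuous function satisfying (\ref{dz}), because the superexponential decay of $\Phi^{(m-i)}$ near $\Z$ and at $\infty$ swallows the polynomial growth of $G$. Hence $\Phi g$ is a finite sum of derivatives of functions in $K_0$, i.e.\ $\Phi g\in K$, and therefore $\widehat{\Phi g}\in\hat K$. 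It remains to match $\widehat{\Phi g}$ with $\varphi\ast h$ up to constants: with the paper's normalization $\widehat{\Phi g}=\widehat\Phi\ast\widehat g=\varphi\ast h$ (since $\widehat g=\widehat{\check h}=h$), so $\varphi\ast h\in\hat K$.

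I expect the main obstacle in (e) to be purely bookkeeping: justifying the Leibniz expansion for $\Phi\cdot g$ as a distribution, checking that every $\Phi^{(j)}G$ genuinely satisfies (\ref{dz}) with a single pair of constants (one must track how many derivatives are spent and shrink $C_2$ correspondingly — finitely many times, so fine), and confirming the convolution/product Fourier identity holds in the tempered-distribution sense for a Schwartz function times an $L^q$-transform. None of these is deep, but the cleanest write-up is to first prove the auxiliary fact ``if $\Psi\in K_0$ then $\Psi^{(j)}\in K$ for all $j$, and $\Psi\cdot G\in K_0$ for any polynomially bounded continuous $G$,'' and then read off (e) in two lines.
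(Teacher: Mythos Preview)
Your treatment of (a)--(d) is correct and essentially matches the paper; your argument for (d) via $\overline{f}\in\hat K$ (using $\tilde\Phi(t)=\overline{\Phi(-t)}\in K_0$) is a clean variant of the paper's even/odd decomposition of $\Phi$.

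The gap is in (e). Your Leibniz expansion $\Phi g=\sum_{i}(-1)^i\binom{m}{i}(\Phi^{(i)}G)^{(m-i)}$ requires $\Phi^{(i)}G\in K_0$ for $i=0,\dots,m$, hence $\Phi^{(i)}\in K_0$ for all these $i$. The hypothesis, however, gives only $\Phi,\Phi'\in K_0$, and your justification that ``all derivatives of $\Phi$ inherit deep zeros'' conflates two different closure properties. What was established is that $\hat K_0$ is closed under differentiation (because $t\Phi(t)\in K_0$ when $\Phi\in K_0$), \emph{not} that $K_0$ is. Differentiating $\Phi$ corresponds to multiplying $\varphi$ by $2\pi ix$ on the Fourier side, which says nothing about whether $\Phi'$ satisfies the pointwise bound~(\ref{dz}); that is precisely why $\Phi'\in K_0$ appears as an explicit extra hypothesis rather than a consequence. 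Since the order $m$ coming from the general structure theorem for tempered distributions is not a priori $\le 1$, your argument does not close.

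The paper sidesteps this with a targeted decomposition that forces $m=1$: write $h=h_1+h_2$ with $h_1(x)=h(x)/(1-2\pi ix)$ and $h_2(x)=-2\pi ix\,h_1(x)$. Then $h_1\in L^1(\R)$, so its inverse transform $H_1$ is continuous and bounded, and $\varphi\ast h_1=\widehat{\Phi H_1}\in\hat K$ directly. For the second piece, the inverse transform of $\varphi\ast h_2$ is $\Phi H_1'=(\Phi H_1)'-\Phi' H_1$, which lies in $K$ using only $\Phi,\Phi'\in K_0$. This is exactly your Leibniz idea, but engineered so that a single derivative of $\Phi$ suffices. Your structure-theorem route would go through verbatim under the stronger assumption that every $\Phi^{(j)}\in K_0$ --- which is indeed what the paper imposes later in Theorem~3, where the same mechanism is reused for general $h\in X^\ast$.
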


\begin{proof}
Statements (a) and (b) are obvious.

Clearly, it suffices to  prove statements  (c) and (d) for the case $l=1$ in (\ref{dd}), i.e. when $f(x)=\widehat{\Phi^{(k)}}=(2\pi i x)^k\varphi(x)$.

By  (\ref{0}), we see that $xf(x)\in K.$

Since the function $t\Phi(t)$ also satisfies (\ref{dz}), then
\[\varphi'= \widehat{(-2\pi i t)\Phi(t)}\in \hat K.\] From (\ref{0}) we get $x^k\varphi'(x)\in\hat K$ and $x^{k-1}\varphi(x)\in \hat K$ which yields
$f'\in\hat K$.  This proves (c).

Now,
write
\[\Phi(t):=\Phi_r(t)+i\Phi_i(t):=\frac{\Phi(t)+\Phi(-t)}{2}+i\frac{\Phi(t)-\Phi(-t)}{2i}.\]
Clearly, $\Phi_r$ and $\Phi_i$ satisfy (\ref{dz}). So, we have $\varphi=\varphi_r+i\varphi_i$, where the functions $\varphi_r=\hat \Phi_r\in\hat K$ and $\varphi_i=\hat\Phi_i\in\hat K$ are real on the real line. Hence, if $k$ is even, we see that
\[\mbox{Re}\,f(x)=(2\pi i x)^k\varphi_r(x)\in\hat K,\quad \mbox{Im}\,f(x)=(2\pi i x)^k\varphi_i(x)\in\hat K.\] Similarly,  one proves (d) for the odd $k$.

To prove (e), write $h(x)=h_1(x)+ h_2(x)$, where   \[ h_1(x):=\frac{h(x)}{1-2\pi i x}, \quad h_2(x):= -2\pi i x h_1(x).\] Hence,
$\varphi\ast h=\varphi\ast h_1+\varphi\ast  h_2.$

Clearly,   $h_1\in L^1(\R)$, and so  its  inverse Fourier transform  $H_1$ is continuous and bounded. This gives
 $\varphi\ast h_1= \widehat{\Phi H_1}\in\hat K$.

 Observe that $h_2=\widehat{H_1'}$, and so the inverse Fourier transform of
 $\varphi\ast h_2$ is the distribution $\Phi H_1'= (\Phi H_1)'- \Phi' H_1$. Since, by assumption, $\Phi'$ satisfies (\ref{dz}), we conclude that $\varphi\ast h_2\in\hat K$, which completes the proof of Lemma 1.
\end{proof}

\section{Uniqueness Theorem for the Class $\hat K$}

   \begin{theorem}   Assume $\La$  satisfies (\ref{ais}) and (\ref{an})
                and  $f \in\hat  K$.
                 If  $f|_\La =0$  then  $f=0$.
\end{theorem}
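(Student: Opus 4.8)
The plan is to reduce Theorem 2 for the distribution class $\hat K$ to the Model Example (the uniqueness theorem for the function class $\hat K_0$) by peeling off the polynomial factors that distinguish a general $f\in\hat K$ from an element of $\hat K_0$. Recall that $f$ has the form $f(x)=\sum_{j=1}^l(2\pi i x)^{k_j}\varphi_j(x)$ with $\varphi_j=\hat\Phi_j$, $\Phi_j\in K_0$; in particular, by Lemma 1(b), $f$ is analytic and of polynomial growth in a strip around $\R$, and by Lemma 1(c) every derivative $f^{(m)}$ lies in $\hat K$ as well. So the first step is to observe that we are dealing with an analytic function in a strip that vanishes on $\La$, and $\La$ accumulates only at $\pm\infty$; the obstruction to concluding $f\equiv 0$ immediately is precisely the polynomial growth, which is why the pure function-space argument of the Model Example does not apply verbatim.

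The second step is to imitate the periodization argument from the Model Example but adapted to a representation $F=\sum_j \Phi_j^{(k_j)}\in K$. As in the sketch, from boundedness of $f'$ on $\R$ (which holds since $f'\in\hat K$ is bounded on $\R$) together with \eqref{ais} and \eqref{an} we obtain $f(n)=O(r^{|n|})$ as $|n|\to\infty$. Now form the periodization $P(F)(t)=\sum_{k\in\Z}F(t+k)$, interpreting the sum distributionally: since each $\Phi_j$ decays exponentially by \eqref{dz}, the periodized series converges to a smooth periodic function away from $\Z$, and near each integer it is a finite-order derivative of something vanishing to infinite order there. Its Fourier coefficients are $f(n)$, which decay geometrically, so $P(F)$ extends analytically to a strip; meanwhile \eqref{dz} forces $P(F)$ to vanish to infinite order at the origin. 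Hence $P(F)\equiv 0$, giving $f(n)=0$ for all $n\in\Z$.

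The third step is the iteration: having shown $f$ vanishes on all of $\Z$, we want to show every derivative $f^{(m)}$ vanishes on $\Z$. The mechanism is that $\La$ is an exponentially small perturbation of $\Z$ with $a_n\neq 0$, so from $f|_\La=0$ and $f|_\Z=0$ one extracts, via a divided-difference / Taylor expansion at each $n$ using the analyticity and polynomial-growth bound in the strip, that $f'(n)=0$; here the key quantitative input is that $|a_n|<Cr^{|n|}$ beats the polynomial growth of $f$ and its derivatives. Then one applies Lemma 1(c) to replace $f$ by $f'\in\hat K$ and repeats, concluding $f^{(m)}(n)=0$ for all $m\ge 0$ and all $n\in\Z$. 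Finally, since $f$ is analytic in a strip containing $\R$ and all its derivatives vanish at $0$, we get $f\equiv 0$ in the strip, hence on $\R$ as a distribution, which is the claim.

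I expect the main obstacle to be the rigorous bookkeeping in the second and third steps when $f$ is a genuine distribution rather than a function: one must make sense of the periodization $P(F)$, its Fourier expansion, and the "infinite-order zero at the origin" statement when $F$ is a sum of derivatives $\Phi_j^{(k_j)}$, and one must control the competition between the polynomial growth of $f^{(m)}$ in the strip and the exponential smallness $r^{|n|}$ of the perturbations $a_n$ uniformly in the iteration. Once those estimates are in place the argument is a faithful distributional upgrade of the Model Example, so the conceptual content is entirely in handling \eqref{dd} termwise while keeping the constants under control.
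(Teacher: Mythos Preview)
Your Step~3 contains a genuine gap. From $f(n)=0$ and $f(n+a_n)=0$ alone you cannot conclude $f'(n)=0$ when $f$ is complex-valued: the Rolle/mean-value argument that would locate a zero of $f'$ strictly between $n$ and $n+a_n$ requires a real-valued function, while a Taylor expansion around $n$ only yields the bound $|f'(n)|\le C|a_n|\,|n|^C=O(r^{|n|}|n|^C)$, not $f'(n)=0$. So as written the iteration does not start, and ``replace $f$ by $f'$ and repeat'' has no hypothesis of the form $f'|_{\La'}=0$ to feed back in. (There is also a minor slip in Step~2: elements of $\hat K$ have at most polynomial growth by Lemma~1(b), not boundedness, so $f'$ need not be bounded on $\R$; this is harmless, since polynomial growth of $f'$ together with (\ref{an}) still gives geometric decay of $f(n)$.)

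The paper repairs exactly this point by first passing to real and imaginary parts. By Lemma~1(d), $f_r:=\mathrm{Re}\,f$ and $f_i:=\mathrm{Im}\,f$ lie in $\hat K$ and vanish on $\La$. The Main Lemma (your Step~2, which the paper makes rigorous via a sinc-power regularization $h_\epsilon$ and the Poisson formula, arriving at the estimate $|\langle R,\Psi\rangle|\le C\|\Psi^{(k)}\|_\infty e^{-C/d}$ for test functions $\Psi$ supported in $(-d,d)$, and then invoking Lemma~2) gives $f_r|_\Z=0$. Now Rolle's theorem, applied to the \emph{real} function $f_r$ on each interval with endpoints $n$ and $n+a_n$, produces a new set $\La_1=\{n+a_n^{(1)}\}$ with $0<|a_n^{(1)}|<|a_n|$ on which $f_r'$ vanishes; $\La_1$ again satisfies (\ref{ais}) and (\ref{an}), $f_r'\in\hat K$ by Lemma~1(c), and the Main Lemma applies once more to give $f_r'|_\Z=0$. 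Iterating yields $f_r^{(m)}|_\Z=0$ for all $m$, hence $f_r\equiv 0$ by analyticity; likewise $f_i\equiv 0$. Your scheme can be salvaged either by inserting this real/imaginary splitting and using Rolle, or by weakening the conclusion of your divided-difference step to ``$|f'(n)|$ decays geometrically'' and then \emph{re-applying} the periodization argument of Step~2 to $f'$ (that argument needs only geometric decay of the values at the integers, not $f'|_\La=0$) to obtain $f'|_\Z=0$.
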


Hence, the sets $\La$ satisfying (\ref{ais}) and (\ref{an}) are uniqueness sets for the class $\hat K.$

\begin{ml} Assume $\La$ satisfies (\ref{ais}) and (\ref{an}). Then
\[
                    f\in\hat K, f |_\La= 0\Rightarrow f |_\Z=0 .
\]
\end{ml}

\begin{proof}  For convenience, in what follows we denote by $C$ different positive constants.

 Take any function $f\in\hat K$ which vanishes on $\La$.
 We have to show that $f|_\Z=0$.

 By Lemma 1, $f'\in K$ and  has at most polynomial growth on $\R$. So,   it follows from (\ref{ais}) and (\ref{an}) that \[ |f(n)|<C|n|^Cr^{|n|},\quad n\in\Z,\]
 where $0<r<1.$
 This shows that  the function
\[
R(t):=\sum_{n\in\Z}f(n)e^{2\pi i nt}, \quad t\in\R.
\]
is analytic in some strip $|$Im$\,z|<C$.

Clearly, to prove the lemma it suffices to  show that
$R(t)\equiv0$.
For this purpose we introduce the function $f_\epsilon:=fh_\epsilon$, where $\epsilon>0$ and
\[
h_\epsilon(x):=\left(\frac{\sin(2\pi \epsilon x)}{2\pi\epsilon x}\right)^N.
\]
 We will assume that $N$ is an even integer so large that  $f_\epsilon\in L^1(\R)$.

It is easy to see that $h_\epsilon=\hat H_\epsilon$, where
\[ H_\epsilon(t)= \left(\frac{1}{2\epsilon}1_{\epsilon}(t)\right)^{N*}.\] Here $1_\epsilon$ is the indicator function of $[-\epsilon,\epsilon]$. Hence, $H_\epsilon(t)\geq 0, t\in\R,$  and so
\[
\|H_\epsilon\|_1=h_\epsilon(0)=1.
\]

Given  $d>0$,  we denote by $S(d)$ the subspace of $S(\R)$ of functions $\Psi$ vanishing outside $(-d,d).$
In what follows we  assume that $0<d<1/2$ and $\epsilon>0$ is so small  that $d+N\epsilon<1/2$.

Fix any function  $\Psi\in S(-d,d)$. Then  $$\Psi\ast H_\epsilon\in S(d+N\epsilon),$$ and we have
\begin{equation}\label{hh}
\|(\Psi\ast H_\epsilon)^{(k)}\|_\infty\leq \|\Psi^{(k)}\|_\infty\|H_\epsilon\|_1=\|\Psi^{(k)}\|_\infty.
\end{equation}

Denote by $F_\epsilon$ the inverse Fourier transform of $f_\epsilon$, and by $\psi$ the Fourier transform of $\Psi$. From (\ref{dd}), for every $n\in\Z$ we obtain:
\[
|\langle F_\epsilon,\Psi(t-n)\rangle|=|\langle f_\epsilon,e^{2\pi inx}\psi\rangle|=|\langle f,e^{2\pi inx}\psi h_\epsilon\rangle|=\]\[
|\langle \sum_{j=1}^l\Phi_j^{(k_j)},(\Psi\ast H_\epsilon)(t-n)\rangle|
\leq\|(\Psi\ast H_\epsilon)^{(k)}\|_\infty\sum_{j=1}^l\int_{n-d-N\epsilon}^{n+d+N\epsilon}|\Phi_j|,\] where $k:=\max_j{k_j}$.
Recalling that the functions $\Phi_j$  satisfy (\ref{dz}), by  (\ref{hh}) we obtain
\begin{equation}\label{4}
|\langle F_\epsilon,\Psi(t-n)\rangle|\leq Ce^{-C(|n|+\frac{1}{N\epsilon+d})}\| \Psi^{(k)}\|_{\infty}, \quad n\in\Z.
\end{equation}

Set
\[
R_\epsilon(t):=\sum_{n\in\Z}f_\epsilon(n)e^{2\pi i n t},
\]
and let us calculate the product $\langle R_\epsilon,\Psi\rangle.$

By the Poisson formula,
\[
R_\epsilon(t)=\sum_{n\in\Z}f_\epsilon(n)e^{2\pi i n t}= \sum_{n\in\Z}F_\epsilon(t+n).
\]
Therefore, by (\ref{4}),
\[
|\langle R_\epsilon, \Psi\rangle|=|\langle\sum_{n\in\Z}F_\epsilon(t+n), \Psi(t)\rangle|=|\langle F_\epsilon(t),\sum_{n\in\Z} \Psi(t-n)\rangle|
\]
\[
\leq C\| \Psi^{(k)}\|_{\infty}e^{-\frac{C}{N\epsilon+d}}.
\]
Letting $\epsilon\to 0$  we arrive at the inequality:
\begin{equation}\label{5}
|\langle R, \psi\rangle| \leq C\|\Psi^{(k)}\|_{\infty}e^{-\frac{C}{d}},
\end{equation}  which holds for some  $k$ and every $0< d<1/2$ and  $ \Psi\in S(d)$.

Now, the Main Lemma follows from

\begin{lemma} Assume a function $R$ is  analytic  in a strip $|${\rm Im}$\,z|<C$  and  satisfies (\ref{5}). Then $R=0$.\end{lemma}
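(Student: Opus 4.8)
The plan is to exploit the super-polynomial decay in $d$ on the right-hand side of (\ref{5}): by testing $R$ against rescaled bump functions and letting the scale shrink to $0$, the estimate forces $R$ to vanish to infinite order at the origin, and analyticity on the strip then finishes the proof.

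Fix once and for all a nonnegative $\rho\in S(\R)$, supported in $(-1,1)$ and not identically zero. For each integer $m\geq0$ I would choose a polynomial $P_m$ of degree at most $m$ so that $\Psi_m:=P_m\rho$ satisfies
\[
\int_\R s^j\Psi_m(s)\,ds=0\ \ (0\leq j<m),\qquad \int_\R s^m\Psi_m(s)\,ds=1 .
\]
This is possible: writing $P_m(s)=\sum_{i=0}^{m}c_is^i$, the conditions form a linear system for $(c_0,\dots,c_m)$ whose matrix is the moment matrix $\bigl(\int_\R s^{i+j}\rho(s)\,ds\bigr)_{i,j=0}^{m}$ of the finite positive measure $\rho(s)\,ds$, hence positive definite and invertible. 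Note $\Psi_m\in S(\R)$ is supported in $(-1,1)$, so for $0<d<1/2$ the rescaled function $\Psi_{m,d}(t):=\Psi_m(t/d)$ lies in $S(d)$. Applying (\ref{5}) to $\Psi_{m,d}$, using $\|\Psi_{m,d}^{(k)}\|_\infty=d^{-k}\|\Psi_m^{(k)}\|_\infty$ and the substitution $t=ds$ in $\int_\R R(t)\Psi_{m,d}(t)\,dt$, I obtain
\[
\Bigl|\int_\R R(ds)\,\Psi_m(s)\,ds\Bigr|\leq C\,d^{-k-1}e^{-C/d},\qquad 0<d<1/2,
\]
where $k$ is the exponent furnished by (\ref{5}) and $C$ does not depend on $d$.

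Next I would prove by induction on $m$ that $R^{(m)}(0)=0$. Since $R$ is analytic on the strip it is $C^\infty$ with locally bounded derivatives (Cauchy's estimates), so Taylor's formula gives, uniformly for $|s|\leq1$ and $0<d<1/2$,
\[
R(ds)=\sum_{j=0}^{m}\frac{R^{(j)}(0)}{j!}(ds)^j+O(d^{m+1}).
\]
Substituting this into the previous inequality and using the inductive hypothesis $R^{(0)}(0)=\dots=R^{(m-1)}(0)=0$ together with the moment normalization of $\Psi_m$, the integral equals $\frac{1}{m!}R^{(m)}(0)\,d^m+O(d^{m+1})$, so after dividing by $d^m$,
\[
\Bigl|\frac{1}{m!}\,R^{(m)}(0)+O(d)\Bigr|\leq C\,d^{-m-k-1}e^{-C/d}.
\]
Letting $d\to0^+$, the right-hand side tends to $0$ (an exponential beats any power of $d$), so $R^{(m)}(0)=0$, completing the induction. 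Hence $R$ vanishes to infinite order at $0$; being analytic on the connected strip, it vanishes identically there, in particular on $\R$. I do not foresee a genuine obstacle: the one point to handle with a little care is producing, for each $m$, a single fixed compactly supported smooth test function with the prescribed vanishing moments — done by the polynomial-times-bump construction above — after which only the bookkeeping of powers of $d$ remains, and that is harmless since $d^{-m-k-1}e^{-C/d}\to0$ for every fixed $m$.
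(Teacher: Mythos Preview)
Your proof is correct and follows exactly the approach the paper indicates (but omits the details of): show via suitable test functions that $R$ has a zero of infinite order at $0$, then invoke analyticity. One small redundancy: since your $\Psi_m$ already has vanishing moments of all orders $<m$, the Taylor terms with $j<m$ integrate to zero automatically, so the induction hypothesis is not actually needed---each $R^{(m)}(0)=0$ follows directly.
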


The proof is simple: One may use the standard calculus to check that $R$ must have zero of infinite order at the origin, and so $R=0$. We omit the details.
\end{proof}

\section{Proofs of Theorem 1 and 2}

\begin{proof}[Proof of Theorem 2] Assume $\La$ satisfies (1) and (2), $f\in\hat K$  and $f|_\La=0$. Write $f=f_r+if_i,$ where $f_r(x):=$Re$\,f(x)$ and $f_i(x):=$Im$\, f(x)$.

 Let us show that $f_r=0$. This function vanishes on $\La$ and by Lemma 1,  $f_r\in K$.  It follows from the Main Lemma that $f_r|_\Z=0$.

 Since $f_r$  is  real  for real $x$ and vanishes both on $\Z$  and on $\La=\{n+a_n, n\in\Z\}, a_n\ne0$,  its derivative $f'_r$  vanishes on some set $\La_1:=\{n+a_1^{(1)}\}$, where each point $a^{(1)}_n$ lies inside the open interval between $0$ and $a_n$. Since $\La$ satisfies (\ref{ais}) and (\ref{an}),  so does   $\La_1$.
  By Lemma 1, $f_r'\in  K.$ We now we apply the Main Lemma again to get $f_r'|_\Z=0$. A straightforward iteration of this argument proves that $f_r^{(j)}|_\Z=0$ for every $j\in\N$.  Since $f_r$ is analytic in some strip $|$Im$\,z|<C$, we conclude that $f_r=0.$ Similarly, one proves that $f_i=0$, and so $f=0.$
\end{proof}

\begin{proof}[Proof of Theorem 1]
Choose any real function $\Phi\in S(\R)$ such that $\Phi$ and $\Phi '$ satisfy (\ref{dz}) and such that $\Phi(-t)=\Phi(t)>0$ for $t\not\in\Z$. Then its Fourier   transform $\varphi$ is real and $\varphi(-x)=\varphi(x), x\in\R$.

Let $\La$ satisfy (\ref{ais}) and (\ref{an}).
 Assume the set of  translates  $\{\varphi(x-\lambda),\lambda\in\La\}$ does not span $L^p(\R)$. Then there is a non-trivial function $h\in L^{q}(\R), 1/p+1/q=1,$ such that $$\int_\R \varphi(x-\lambda)h(x)\,dx= (\varphi\ast h)(\lambda)=0,\quad\lambda\in\La.$$ By Lemma 1, $\varphi\ast h\in\hat  K$.  Theorem 2 yields  $\varphi \ast h=0.$ This means that all translates  $\{\varphi(x-s), s\in\R\}$ do not span the space $L^p(\R)$. However, this  contradicts Beurling's theorem cited in the introduction.
 Theorem 1 is proved.
\end{proof}

\section{Extension}
Below we assume that $X$ is a Banach function space on $\R$ with the property that  the Schwartz  space $S(\R)$ is continuously embedded  and dense in $X$.

The following extension of Theorem 1 hods true:

\begin{theorem}    Assume that no non-trivial distribution  $h\in X^*$ has spectrum on $\Z$.
Then there exists $\varphi\in S(\R)$ such that for every set   $\La$ satisfying
                               (2) and (3), the set of translates
$$ \{\varphi(t-\lambda),   \lambda\in \La\}$$
                            spans $X$.
\end{theorem}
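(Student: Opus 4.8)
The plan is to mimic the proof of Theorem 1 almost verbatim, replacing the appeal to Beurling's theorem (which was specific to $L^p(\R)$, $1<p<2$) by the hypothesis on $X^*$ together with a separate classical completeness statement for translates in general spaces. First I would fix the same kind of generator as before: choose a real, even Schwartz function $\Phi$ such that $\Phi$ and $\Phi'$ satisfy (\ref{dz}) and $\Phi(t)>0$ for $t\notin\Z$, and set $\varphi=\hat\Phi$; this choice is independent of $X$. Then, assuming the translates $\{\varphi(t-\lambda),\lambda\in\La\}$ do not span $X$, the Hahn--Banach theorem produces a nontrivial $h\in X^*$ annihilating all these translates, i.e. $\langle h,\varphi(\cdot-\lambda)\rangle=0$ for $\lambda\in\La$. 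The next step is to interpret this as a statement about a function or distribution $F:=\varphi\ast h$ in the class $\hat K$; here one must be a little careful since $h$ is only a functional on $X$, not necessarily an $L^q$ function, so I would use density of $S(\R)$ in $X$ to make sense of the convolution $\varphi\ast h$ as a tempered distribution and check it lies in $\hat K$ — this is the role played by Lemma 1(e) in the $L^p$ case, and the analogous verification (writing $h=h_1+h_2$ with $h_1$ of a better class, $h_2=\widehat{H_1'}$, etc.) should go through because $\varphi$ comes from $\Phi,\Phi'\in K_0$.

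Once $F=\varphi\ast h\in\hat K$ and $F|_\La=0$, Theorem 2 gives $F\equiv0$, that is, $\varphi\ast h=0$ as a distribution. Equivalently, $\widehat\varphi\cdot\widehat h=\Phi\cdot\widehat h=0$, and since $\Phi$ vanishes exactly on $\Z$, the distribution $\widehat h$ must be supported on $\Z$ — in other words $h$ has spectrum contained in $\Z$. But the hypothesis of the theorem says no nontrivial $h\in X^*$ has spectrum on $\Z$, so $h=0$, contradicting nontriviality. This forces the translates to span $X$.

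The one genuine obstacle I anticipate is the passage "$\varphi\ast h=0 \Rightarrow \operatorname{supp}\widehat h\subset\Z$": one must justify multiplying the distribution $\widehat h$ by $\Phi$ and concluding the support statement, which requires knowing $\widehat h$ is a tempered distribution (true: $X^*$ embeds in $S'(\R)$ because $S(\R)\hookrightarrow X$ continuously and densely) and that $\Phi>0$ off $\Z$ with zeros of finite order at each integer — but (\ref{dz}) gives zeros of \emph{infinite} order there, so $\Phi\cdot\widehat h=0$ still only yields $\operatorname{supp}\widehat h\subset\Z$, which is exactly what is needed; no finiteness-of-order subtlety actually bites. A secondary point to handle with care is reducing to the real and imaginary parts of $h$, or equivalently ensuring the argument does not implicitly assume $h$ is a function; using that $\varphi$ is real and even, the reductions in the proof of Theorem 2 (splitting $F=f_r+if_i$ and iterating the Main Lemma on derivatives) apply directly to $F=\varphi\ast h\in\hat K$ and require nothing about $h$ beyond $F\in\hat K$. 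I would also note explicitly, as the paper does for $L^p$, that the spanning of \emph{all} real translates $\{\varphi(\cdot-s):s\in\R\}$ in $X$ follows from $\varphi\ast h=0\Rightarrow h=0$ in the same way, so the discrete result is strictly stronger and self-contained. Finally one should remark that the $L^p(\R)$ spaces with $1<p<\infty$ satisfy the hypothesis (no nontrivial $h\in L^q$ has spectrum on $\Z$, since such $h$ would be a trigonometric-type object not in $L^q$), recovering Theorem 1 as a special case.
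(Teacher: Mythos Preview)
Your overall strategy matches the paper's, and what you write about Hahn--Banach, Theorem~2, and the support conclusion $\operatorname{supp}\widehat h\subset\Z$ is correct. The gap is in the step where you claim $\varphi\ast h\in\hat K$. You propose to re-run the decomposition from Lemma~1(e), writing $h=h_1+h_2$ with $h_1=h/(1-2\pi ix)$, and assert that the verification ``should go through because $\varphi$ comes from $\Phi,\Phi'\in K_0$.'' But that decomposition was engineered for $h\in L^q$: dividing once by $1-2\pi ix$ puts $h_1$ into $L^1$, so its inverse transform $H_1$ is a bounded continuous function, and a \emph{single} integration by parts then handles $h_2$. For a general $h\in X^*\subset S'(\R)$ this fails --- dividing by $1-2\pi ix$ once need not land $h$ in any function space at all.

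What one actually has is the structure theorem for tempered distributions: the inverse Fourier transform $H$ of $h$ can be written as $H=G^{(m)}$ for some continuous $G$ of polynomial growth and some $m\geq0$ depending on $h$. Then
\[
\Phi H=\Phi\,G^{(m)}=\sum_{j=0}^m(-1)^j\binom{m}{j}\bigl(\Phi^{(j)}G\bigr)^{(m-j)},
\]
and to recognise each $\Phi^{(j)}G$ as a continuous function satisfying (\ref{dz}) you need $\Phi^{(j)}\in K_0$ for every $j\leq m$. Since $m$ can be arbitrarily large, you must require \emph{every} derivative $\Phi^{(j)}$ to satisfy (\ref{dz}), not just $\Phi$ and $\Phi'$. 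This is precisely the extra hypothesis the paper imposes on $\Phi$ in its proof of Theorem~3; with that strengthened choice of $\Phi$ (still easily arranged within $S(\R)$), the rest of your argument is correct and coincides with the paper's.
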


\begin{proof}  Choose a Schwartz function $\varphi$ as in the proof of Theorem 1, where we additionally require that  {\it every derivative}  $\Phi^{(j)}$  satisfies (3).  If the $\La$-translates of $\varphi$ do not span $X$, then  there is a distribution $h\in X^\ast$   such that the convolution $\varphi * h$ vanishes on $\La$. The distributional inverse Fourier transform of  $\varphi * h$ is the the distribution $\Phi H,$ where $h=\hat H.$ Since  $H$ is the distributional derivative of finite order of some continuous function, similarly to Lemma 1 (e) one may check that $\varphi * h\in \hat K$. Hence, by Theorem 2,
$$
                           \Phi H  = 0.
$$
It follows that the support of  $H$ lies in $\Z$   (the zero set of $\Phi$). Contradiction.\end{proof}

  \begin{exa}   Let $w>0$ be a weight  vanishing at infinity.
                      Then Theorem 3 holds for the space
$$
                          X=   L^1(w, \R) := \{f:    \|f\|_w= \int_\R | f(x)|w(x)\,dx < \infty\}.
$$\end{exa}
                 Indeed, the elements of the dual space  $X^\ast=L^\infty(1/w, \R)$ are functions vanishing at infinity. One may prove that no non-trivial function $h\in X^\ast$   may have spectrum on $\Z$.

   However, as mentioned in Introduction,  Theorem 3 ceases to be true for $w=1 $, that is for the space $L^1(\R)$.

       Theorem 3 is also applicable to the separable Lorentz spaces and also to more general symmetric spaces, see definition of these spaces in \cite{kps}, Ch. 2.


\end{large}

\begin{thebibliography}{99}


\bibitem[AO96]{ao}
A. Atzmon and A. Olevskii,   `Completeness of integer translates in function spaces on $\R$', {\it J. Approx. Theory} 87, (1996), no. 3, 291--327.


\bibitem[B51]{B} A. Beurling, `On a closure problem', {\it Ark. Mat. } 1, (1951). 301--303. See also: {\it The Collected Works of Arne Beurling}, in: Harmonic Analysis, vol. 2, Harmonic Analysis. Birkhäuser, Boston, 1989.




\bibitem[Bl06]{Bl} N. Blank, `Generating sets for Beurling algebras', {\it J. Approx. Theory} 140, (2006), no. 1, 61--70.

\bibitem[BOU06]{BOU} J. Bruna, A. Olevskii and A. Ulanovskii, `Completeness in $L^1(\R)$ of discrete translates', {\it Rev. Mat. Iberoam.} 22, (2006), no. 1, 1--16.

\bibitem[FJ98]{FJ}
 F.J. Friedlander and M.S. Joshi, `Introduction to the Theory of Distributions.'
Cambridge University Press, 1998.

\bibitem[KPS82]{kps}
S. G. Kre\v{i}n, Ju. I. Petunin and E. M. Semenov,  `Interpolation of linear operators'. Translations of Mathematical Monographs, vol. 54, AMS, Providence, R.I., 1982.


\bibitem[LO11]{LO} N. Lev and A.  Olevskii, `Wiener's "closure of translates" problem and Piatetski--Shapiro's uniqueness phenomenon', {\it Ann. of Math.} (2) 174, (2011), no. 1, 519--541.




 \bibitem[Ol97]{O} A. Olevskii, `Completeness in $L^2(\R)$ of almost integer translates', {\it
 C. R. Acad. Sci. Paris S$\acute{e}$r. I Math.} 324, (1997), no. 9, 987--991.

 \bibitem[OU04]{OU2}  A. Olevskii and A. Ulanovskii, `Almost integer translates. Do nice generators exist?', {\it J. Fourier
Anal. Appl. } 10, (2004), no. 1, 93--104.


\bibitem[OU16]{ou} A. Olevskii and A. Ulanovskii,  `Functions with Disconnected Spectrum: Sampling, Interpolation, Translates.
AMS,  University Lecture Series,  65, 2016.





\bibitem[W32]{w}
N. Wiener, `Tauberian Theorems', {\it Annals of Math.} 33 (1): 1--100, 1932.
\end{thebibliography}
\end{document}